\newtheorem{thm}{Theorem}[section]
\newtheorem{prop}[thm]{Proposition}
\newtheorem{rem}[thm]{Remark}
\newcommand{\real}{\mathbb{R}}
\numberwithin{equation}{section}
\begin{document}

\title{Wild solutions for 2D incompressible ideal flow with passive tracer
\thanks{%{Received date / Revised version date}
          % The correct dates will be entered by the CMS editor}}
}}
          %For each author, make a block with the following four macros:
 \author{Anne C. Bronzi
 \thanks {Department of Mathematics, Statistics and Computer Science,
University of Illinois,
 Chicago, IL 60607, U.S.A., (annebronzi@gmail.com).} \and Milton C. Lopes Filho
 \thanks {Instituto de Matem\'atica, Universidade Federal do Rio de Janeiro,
 Cidade Universit\'aria, Ilha do Fund\~ao, Caixa Postal 68530,
 21941-909 Rio de Janeiro, RJ, Brasil, (mlopes@im.ufrj.br).} \and
 Helena J. Nussenzveig Lopes \thanks { Instituto de Matem\'atica,
 Universidade Federal do Rio de Janeiro, Cidade Universit\'aria,
 Ilha do Fund\~ao, Caixa Postal 68530, 21941-909 Rio de Janeiro, RJ, Brasil, (hlopes@im.ufrj.br).}}

\pagestyle{myheadings} \markboth{Wild solutions for 2D incompressible ideal
flow with passive tracer}{A. C.  Bronzi, M. C. Lopes Filho, H. J. Nussenzveig Lopes}
\maketitle

\begin{abstract} In \cite{delellis} C. De Lellis and L. Sz\'ekelyhidi Jr.
constructed wild solutions of the
incompressible Euler equations using a reformulation
of the Euler equations as a differential inclusion together with convex integration. In
this article we adapt their construction to the system consisting of adding the
transport of a passive scalar to the two-dimensional incompressible Euler equations.
\end{abstract}

\begin{keywords}
\smallskip Weak solutions, wild solutions, incompressible MHD.

{\bf Subject classifications.} 35Q35, 35D30, 76B03, 76W05.
\end{keywords}

%%%%%%%%%%%%%%%%%%%%%%%%%%%%%%%     INTRODUCTION         %%%%%%%%%%%%%%%%%%%%%%
\section{Introduction}

In this article, we present an adaptation of De Lellis and Sz\'ekelyhidi's
nonuniqueness construction \cite{delellis} to the system obtained by adding a
passive tracer equation to the two-dimensional incompressible Euler equations.
More precisely,
we are concerned with the system:
\begin{eqnarray}\label{p-t}
\left\{\begin{array}{l}
\partial_t v + (v\cdot\nabla)v+\nabla p=0\\
\partial_t b + (v\cdot\nabla)b=0\\
\mbox{\textit{div} }v=0,
\end{array}\right.
\end{eqnarray}
where $v=v(x,t)\in \mathbb R^2$ is the velocity field, $b=b(x,t)\in\mathbb R$ is
the tracer, $p=p(x,t)\in\mathbb R$ is the scalar pressure and $(x,t)\in \mathbb
R^2\times \mathbb R$.

In what follows, we will produce a velocity field $v \in L^{\infty}
(\real^2\times\real;\real^2)$ and a scalar $b\in L^{\infty}(\real^2\times\real;\real)$
who are compactly supported in space-time, such that both $v$ and $b$ are non-zero
in a set of positive measure in space-time, and such that $v$ is a weak solution
of the two-dimensional incompressible Euler equations while $b$ is a weak solution
of the linear transport equation with velocity $v$.

The initial motivation for the present work was to produce a wild solution for
the three-dimensional ideal incompressible magnetohydrodynamics (MHD) equations.
We have not managed to accomplish this and we will point out, later in this work,
the difficulty in achieving it. Nevertheless, we observe that, under a special
symmetry, the 3D MHD equations can be regarded as system \eqref{p-t}. Thus, our
construction may be interpreted as the existence of  wild solutions for the
(symmetry reduced) 3D MHD equations (see Section \ref{sec4}).

The construction of wild solutions has been extended to other problems,
namely for the incompressible porous media equations, see \cite{CFG} and
for a class of active scalar equations, see \cite{shvydkoy}. Our work is
yet another extension of De Lellis and Sz\'ekelyhidi's work; in fact, as
already mentioned, we will see that system \eqref{p-t} can also be interpreted
as a special case of the ideal incompressible MHD equations.

Let us now formulate more precisely the problem we address. We say that a
vector field $(v,b)=(v,b)(x,t)\in
L^2_{loc}(\mathbb{R}_x^2\times \mathbb{R}_t;\mathbb{R}^2\times
\mathbb R)$ is a \textit{weak solution} of (\ref{p-t}) if, for any test
function $\varphi = \varphi(x,t) \in \mathcal{C}^{\infty}_c(\mathbb{R}_x^2\times
\mathbb{R}_t;\mathbb{R})$ and any test vector field $\Psi=\Psi(x,t)\in
\mathcal{C}^{\infty}_c(\mathbb{R}_x^2\times \mathbb{R}_t;\mathbb{R}^2)$
such that $\mathrm{div} \ \Psi=0$, it holds that
\begin{eqnarray*}
&&\int \int (v\cdot \partial_t \Psi + (v\otimes v): \nabla
 \Psi) dxdt = 0, \\
&&\int \int (b \partial_t\varphi+(v b)\cdot \nabla \varphi)dxdt=0,\\
&&\int \int v\cdot \nabla \varphi dxdt = 0;
\end{eqnarray*}
above $v\otimes v$ is the $2\times 2$ matrix given by $(v\otimes v)_{ij}=v_iv_j$,
and $A:B$ stands for the Frobenius product of two $2\times 2$ matrices
$A=(a_{ij})$ and $B=(b_{ij})$, given by $(A : B)=\sum_{i,j=1}^2a_{ij}b_{ij}$.

Our goal in this paper is to construct a special class of weak solutions of
(\ref{p-t}) by using convex integration. We are going to follow the approach
presented in \cite{delellis}, where De Lellis and Sz\'ekelyhidi rewrite the Euler
equations as a differential inclusion and use convex integration to
construct weak solutions of the Euler equations with compact support in time and
space.

The main result of this paper is stated bellow.
\begin{theorem}\label{theo}
Given a bounded domain $\Omega\subset\mathbb R^2\times \mathbb R$,
there exists a weak solution $(v,b) \in L^{\infty}(\mathbb R_x^2\times
\mathbb R_t; \mathbb R^2\times \mathbb R)$ of the Euler equations with a
passive tracer (\ref{p-t}) such that
\begin{itemize}
\item [(i)] $|v(x,t)|=1$ and $|b(x,t)|=1$ for almost every $(x,t)\in \Omega$;
\item [(ii)] $v(x,t)=0$, $b(x,t)=0$ and $p(x,t)=0$  for almost every $(x,t)\in\mathbb
R^2\times \mathbb R\setminus \Omega$.
\end{itemize}
\end{theorem}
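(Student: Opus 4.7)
The plan is to adapt the convex integration / Baire category scheme of \cite{delellis} to the enlarged system (\ref{p-t}), carrying the passive tracer alongside the velocity. First I would reformulate (\ref{p-t}) as a differential inclusion. Introduce auxiliary unknowns $M$ (a symmetric trace-free $2\times 2$ matrix standing for $v\otimes v - \tfrac12|v|^2 I$), a modified pressure $q = p + \tfrac12|v|^2$, and a flux vector $w\in\real^2$ standing for $bv$. Then (\ref{p-t}) becomes the \emph{linear} system
\begin{equation*}
\partial_t v + \di M + \nabla q = 0,\quad \di\, v = 0,\quad \partial_t b + \di\, w = 0,
\end{equation*}
together with the pointwise constraint $(v,M,q,b,w) \in K$, where $K$ consists of those tuples with $v\otimes v - M - qI = 0$, $w = bv$, $|v|=1$ and $|b|=1$. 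A \emph{subsolution} will be any solution of the linear system whose values lie in a bounded open set $U \supset K$ to be chosen.

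Next I would compute the wave cone $\Lambda$ for the linear operator. A plane-wave oscillation $(a,A,c,\beta,\omega)h(\xi\cdot x - \sigma t)$ is compatible with the system exactly when $\xi\cdot a = 0$, $A\xi + c\xi = \sigma a$, and $\xi\cdot\omega = \sigma\beta$. The first block is exactly the De Lellis--Sz\'ekelyhidi wave cone; since $|\xi|^2 \ne 0$ one can, for every $\xi$, prescribe any direction $a\perp\xi$ and then recover $A,c$. The tracer equation adds one independent scalar relation on $(\beta,\omega)$ that leaves $\beta$ free and determines a hyperplane of admissible $\omega$'s. Thus one obtains plane waves that simultaneously wiggle $v$ and $b$ with independently prescribed amplitudes, which is the key input for the perturbation step. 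The open set $U$ should combine the DLS spectral inequality $v\otimes v - M < \tfrac12 I$ with a coupled tracer bound such as $|w-bv|^2 \le (1-|v|^2)(1-|b|^2)$, so that $K$ is the extreme-point set of $\overline{U}$ and $U$ is contained in the $\Lambda$-convex hull of $K$. One also exhibits an explicit nonzero subsolution supported in $\Omega$ (e.g.\ by concatenating several plane-wave packets cut off smoothly).

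Finally I would run the Baire category argument. Endow the bounded set of subsolutions with the metric induced by the $L^\infty$ weak-$*$ topology to obtain a complete metric space $X_0$; the subset where $(v,b)$ takes values in $K$ almost everywhere on $\Omega$ is a $G_\delta$. A localized version of the plane waves from the previous step, combined with the standard DLS / Tartar perturbation lemma, shows that at every subsolution not yet in $K$ one can add an oscillation that strictly increases $\int_\Omega(|v|^2 + |b|^2)\,dx\,dt$ by an amount controlled from below by the current gap. Upper semicontinuity of this functional together with this failure of continuity forces a residual set of elements of $X_0$ to satisfy $|v|=|b|=1$ almost everywhere on $\Omega$, and any such element is the required weak solution.

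The main obstacle, relative to the pure Euler construction, is the geometry of the enlarged target set $K$: one must simultaneously saturate the quadratic matrix constraint $v\otimes v - M - qI = 0$ and the bilinear constraint $w = bv$ while forcing $|v|=|b|=1$ on $\Omega$. Identifying the correct coupled open set $U$, and verifying that it lies in the $\Lambda$-convex hull of $K$---equivalently, producing plane-wave perturbations that push a subsolution toward $K$ in \emph{both} the $(v,M,q)$ and the $(b,w)$ coordinates at once---is the nontrivial algebraic core of the argument; once it is in place, the Baire step follows the pattern of \cite{delellis} essentially verbatim.
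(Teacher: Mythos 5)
Your proposal follows the same overall strategy as the paper: reformulate (\ref{p-t}) as a linear system in $(v,M,q,b,w)$ plus a pointwise inclusion into a nonconvex set $K$, compute the wave cone $\Lambda$ for the linear operator, relax $K$ to an open set of subsolutions, and drive $(v,b)$ towards the constraint set by adding localized plane waves. Your computation of $\Lambda$ (the conditions $\xi\cdot a=0$, $A\xi + c\xi=\sigma a$, $\xi\cdot\omega=\sigma\beta$) matches the paper's $V\xi=0$ formulation, and you correctly flag the algebraic core of the problem: one must check that perturbations along $\Lambda$ can push a subsolution simultaneously in the $(v,M,q)$ block and in the $(b,w)$ block. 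In the paper this is Lemma \ref{LI}, whose proof rests on the observation (Remark \ref{rem}) that $z_1-z_2\in\Lambda$ for all $z_1,z_2\in K$, which is precisely why the $\Lambda$-convex hull of $K$ coincides with the full convex hull and the scheme closes.

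The genuine methodological difference is that you run the final step via Baire category, whereas the paper uses an explicit iterative/approximation scheme (Lemma \ref{lemma4} plus a mollifier-controlled diagonal argument in Section \ref{sec3}); the authors note that a Baire argument would also work but prefer the constructive version for the sake of numerical visualization, so your choice is equally valid and somewhat shorter to describe. Two smaller points where you drift from the paper. First, the paper does not write down an explicit inequality describing the relaxed set; it simply takes $\mathcal{U}=\operatorname{int}(K^{co}\times[-1,1])$ and invokes Carath\'eodory's theorem inside Lemma \ref{LI}. Your proposed set combining the DLS spectral bound with $|w-bv|^2\le(1-|v|^2)(1-|b|^2)$ is a plausible guess but would require a separate verification that it sits inside the $\Lambda$-convex hull of $K$ and has the right extreme points; nothing of the sort is actually needed if one works directly with $K^{co}$. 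Second, you do not need to exhibit a nonzero compactly supported subsolution: the zero state lies in $\mathcal{U}$, and nontriviality of the limit is automatic once the scheme forces $|v|=|b|=1$ a.e.\ on the positive-measure set $\Omega$. Neither of these affects the correctness of the overall plan, but both would add unnecessary work relative to the paper's route.
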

The remainder of this article is divided as follows: in Section \ref{sec1} we
place system \eqref{p-t} in the differential inclusion framework and we
provide the main ingredients to perform the convex integration scheme.
In Section \ref{sec3} we prove Theorem \ref{theo} and, finally, in
Section \ref{sec4} we apply the result to the ideal incompressible
MHD system and we add some concluding remarks.

%%%%%%%%%%%%%%%%%%%%    differential inclusion     %%%%%%%%%%%%%%%%%%%%%%
\section{Convex Integration Scheme}\label{sec1}

Following the usual approach, we rewrite \eqref{p-t} as a system of
linear PDE's
\begin{eqnarray}\label{linear}
\left\{\begin{array}{l}
\partial_t v +\mbox{div }M +\nabla q=0\\
\mbox{div }v=0\\
\partial_t b +\mbox{div }w=0
\end{array}\right.
\end{eqnarray}
where
\begin{eqnarray}\label{constrain}
q=p+\dfrac{|v|^2}{2},\quad M=v\otimes v-\frac{1}{2}|v|^2I_2
\quad \mbox{and}\quad w=bv.
\end{eqnarray}
We define the \textit{constraint set}
$\mathcal K$ by $\mathcal K:= K\times [-1,1]$ with
\begin{equation*}
K=\left\{(b,w,v,M) \in
\{-1,1\}\times
S^1\times S^1\times\mathbb{S}_0^2 : M=v\otimes v-\displaystyle{\frac{1}{2}} I_2, w=bv\right\},
\end{equation*}
where $S^1$ denotes the one dimensional sphere and  $\mathbb S^2_0$ is the set
of symmetric $2\times 2$ matrices with vanishing trace.

It is clear that any solution $(b,w,v,M,q)$ of \eqref{linear} with image
contained in $\mathcal K$ is a solution of \eqref{p-t}.

We introduce the following $4\times 3$ matrix field
\begin{equation} \label{defineV}
V=\left( \begin{array}{cc}
M+qI_2 & v\\
v^t & 0\\
w^t & b
  \end{array}\right)
\end{equation}
and a new coordinate system $y=(x_1,x_2,t)\in \mathbb R^3$. In this setting,
equation (\ref{linear}) reduces to
\begin{eqnarray}\label{div}
\mbox{div}_y V=0.
\end{eqnarray}

Let $\mathcal M_{3\times 3}$ be the set of
symmetric $3\times3$ matrices $A$ such that $A_{3,3}=0$ and
$\mathcal M_{4\times 3}$ the set of
$4\times3$ matrices $A$ such that $(A_{i,j})_{i,j=1,2,3}\in \mathcal M_{3\times 3}$.
Observe that the following linear maps are
isomorphisms
\begin{eqnarray}
 \mathbb R^2\times\mathbb S^2_0\times \mathbb R &\longrightarrow & \mathcal M_{3\times 3}\\
\nonumber (v,M,q)&\longmapsto& \left( \begin{array}{cc}
M+qI_2 & v\\
v^t & 0
  \end{array}\right)
\end{eqnarray}
\begin{eqnarray}
\mathbb R\times\mathbb R^2 &\longrightarrow& \mathbb R^3\\ \nonumber
(b,w)&\longmapsto&\left( \begin{array}{cc}
w^t& b
  \end{array}\right).
\end{eqnarray}
\begin{eqnarray}
\mathbb R\times\mathbb R^2\times \mathbb R^2\times\mathbb S^2_0\times
\mathbb R &\longrightarrow & \mathcal M_{4\times 3}\\
\nonumber (b,w,v,M,q)&\longmapsto& \left( \begin{array}{cc}
M+qI_2 & v\\
v^t & 0\\
w^t& b
  \end{array}\right)
\end{eqnarray}

As the equivalent representations above are the most natural ones, we will,
from now on, not distinguish which one we will be considering, as
it should be clear from the context.

Recall that a \textit{plane wave} solution of \eqref{div} is a solution $V$,
as in \eqref{defineV}, of the form $V=V(y)=Uh(y\cdot
\xi)$, where $h: \mathbb R\rightarrow \mathbb R$ and where
$U\in \mathcal M_{4\times3}$. The \textit{wave cone} is then the set of states
of the planar solutions, that is, the set of states
$U\in \mathcal M_{4\times
3}$ such that
$V(y)=Uh(y\cdot \xi)$ is a solution of \eqref{div} for any $h$. In our case, the wave cone is given by
\begin{equation*}\Lambda=\left\{U\in  \mathcal M_{4\times3}:  \exists \xi \in \mathbb R^3\setminus \{0\}
\; \mbox{such that}\;U\xi =0 \right\},
\end{equation*}
or, equivalently,
\begin{equation*}\Lambda=\left\{(b,w,v,M,q)\in\mathbb{R}\times\mathbb{R}^2\times\mathbb{R}^2
\times\mathbb{S}_0^2\times\mathbb R:  \exists \xi \in \mathbb R^3\setminus \{0\}
 \mbox{ s.t. }\left( \begin{array}{cc}
M+qI_2 & v\\
v^t & 0\\
w^t& b
  \end{array}\right)\xi =0 \right\}.
\end{equation*}

We introduce the relaxed set
\begin{equation*}
\mathcal{U}=\mbox{int} (K^{co} \times [-1,1]),
\end{equation*}
where $K^{co}$ is the convex hull of $K$. One property which is important and
not hard to verify is that
$0\in \mathcal U$. The proof goes along the same line as the one presented
in \cite{delellis}.

We say that $(b,w,v,M,q)$ is a \textit{subsolution} of \eqref{p-t} if
$b\in L^2_{loc}(\mathbb R^2_x\times \mathbb R_t; \mathbb{R}), w,
v\in L^2_{loc}(\mathbb R^2_x\times \mathbb R_t;\mathbb{R}^2), M\in
L^1_{loc}(\mathbb R^2_x\times \mathbb R_t;\mathbb{S}_0^2)$ and $p$ is a distribution,
if $(b,w,v,M,q)$ is a solution of \eqref{linear} and if the image of
$(b,w,v,M,q)$ is contained in $\mathcal U$.

The idea of the convex integration scheme is to construct a sequence of
oscillating solutions which are obtained from adding localized versions of plane wave
solutions to subsolutions of \eqref{p-t}. In order to do so, it is important
to have the wave cone $\Lambda$ large enough so that it is possible to
construct oscillating solutions collinear to a suitable fixed direction.
In our case, it is easy to see  that, for all $b\in \mathbb R$, $v \in
\mathbb R^2$ and $M\in\mathbb S^2_0$, there  exist $q \in \mathbb R$ and
$w\in\mathbb R^2$ such that $(b,w,v,M,q)\in \Lambda$, which guarantees that the wave cone is large.

The following result provides the ''good'' directions to oscillate in the sense
that, by adding localized versions of plane waves
in these directions to a subsolution, one still obtains a subsolution.
\begin{lemma}\label{LI}
There exists  a constant $C>0$ such that, for each $(b,w,v,M,q)\in \mathcal{U}$,
there exists $(\bar{b},\bar{w},\bar{v},\bar{M})\in\mathbb{R}\times \mathbb
R^2\times\mathbb{R}^2\times\mathbb{S}_0^2$ satisfying \begin{itemize}
\item [(i)] $(\bar{b},\bar{w},\bar{v},\bar{M},0)\in\Lambda$;
\item [(ii)] the line segment with endpoints
$(b,w,v,M,q)\pm(\bar{b},\bar{w},\bar{v},
\bar{M},0)$ belongs to $\mathcal{U}$;
\item [(iii)] \begin{equation}\label{est}
|(\bar v,\bar b)|\geq C(2-(|v|^2+|b|^2)).
\end{equation}
\end{itemize}
\end{lemma}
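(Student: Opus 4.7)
The plan is to mimic the analogous lemma of De Lellis--Sz\'ekelyhidi \cite{delellis}, producing a perturbation dominated by either the velocity or the tracer component, depending on which of $1-|v|^2$ and $1-|b|^2$ is larger. First I would make the wave cone $\Lambda$ explicit. Writing $\xi=(\xi',\xi_3)\in\mathbb R^2\times\mathbb R$, the condition $U\xi=0$ decouples into an Euler block ($\bar M\xi'+\bar v\xi_3=0$, $\bar v\cdot\xi'=0$) together with a tracer block ($\bar w\cdot\xi'+\bar b\xi_3=0$). For any $\bar v\in\mathbb R^2$, any $\bar b\in\mathbb R$, and any nonzero $\xi'\perp\bar v$, the formulas $\bar M=-\xi_3|\xi'|^{-2}(\bar v\otimes\xi'+\xi'\otimes\bar v)$ and $\bar w=-\bar b\xi_3|\xi'|^{-2}\xi'$ place $(\bar b,\bar w,\bar v,\bar M,0)\in\Lambda$; the special choice $\xi_3=0$ gives $\bar M=\bar w=0$ with $\bar v$ and $\bar b$ essentially free.

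Second, I would characterize $K^{co}$ by decomposing a convex combination of $K$-points into its positive- and negative-tracer parts, with respective weights $(1\pm b)/2$. Applying a moment/Carath\'eodory argument to each part gives
\[
K^{co} = \bigl\{|b|\le 1,\ |v\pm w|\le 1\pm b,\ M+\tfrac12 I_2\succeq \tfrac{(v+w)\otimes(v+w)}{2(1+b)}+\tfrac{(v-w)\otimes(v-w)}{2(1-b)}\bigr\},
\]
with strict inequalities defining the interior. A direct computation then identifies the trace of the slack matrix with $((1-b^2)(1-|v|^2)-|w-bv|^2)/(1-b^2)$, symmetric in $v$ and $w$; this algebraic identity is the quantitative bridge between the distance to $\partial\mathcal U$ and the combination $(1-|v|^2)+(1-|b|^2)=2-|v|^2-|b|^2$.

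Third, I would split into two cases. If $|v|\le|b|$, then $1-|v|^2\ge\tfrac12(2-|v|^2-|b|^2)$ and it suffices to produce $\bar v$ with $|\bar v|\ge C(1-|v|^2)$; set $\bar b=\bar w=0$ and apply the DLS construction, using a maximal eigenvector of the slack matrix to choose $\bar v$ and $\bar M$ so the segment remains in $\mathcal U$. If $|v|>|b|$, then $1-|b|^2>\tfrac12(2-|v|^2-|b|^2)$ and one should produce $\bar b$ with $|\bar b|\ge C(1-|b|^2)$; the strategy is to take $|\bar b|$ comparable to $1-|b|$, choose $\bar w$ so that $w+\bar w$ is roughly aligned with $v$ (so the linear slacks $1\pm(b\pm\bar b)-|v\pm(w\pm\bar w)|$ stay positive), and take $\bar v$ (with its associated $\bar M$) small, subject to the wave-cone coupling.

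The principal obstacle is establishing these bounds with a uniform constant over $(b,w,v,M,q)\in\mathcal U$. In Case A, one must verify that the DLS-produced perturbation respects the full matrix inequality characterizing $K^{co}$ (not merely the Euler one) as well as the linear tracer constraints; in Case B, the coupling between $\bar b$, $\bar w$, and $\bar v$ imposed by $\Lambda$ must be carefully unwound so that the target scaling $|\bar b|\gtrsim 1-|b|^2$ holds uniformly even when the linear tracer slacks are small in absolute terms. The explicit characterization of $K^{co}$ and the trace identity above are the quantitative tools that make both arguments possible.
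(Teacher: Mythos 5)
Your route is genuinely different from the paper's, but it has a real gap that you yourself flag. The paper does not characterize $K^{co}$ explicitly; it applies Carath\'eodory's theorem to write $(b,w,v,M)=\sum_{i=1}^{8}\lambda_i h_i$ with each $h_i\in K$, takes $\lambda_1=\max_i\lambda_i\ge 1/8$, picks $i^*$ maximizing $\lambda_i^2(|v_i-v_1|^2+|b_i-b_1|^2)$, and sets $\bar h=\tfrac12\lambda_{i^*}(h_{i^*}-h_1)$. The uniform constant then falls out of the chain $|(\bar v,\bar b)|\ge \tfrac{1}{14}\sqrt{|v-v_1|^2+|b-b_1|^2}\ge\tfrac1{14\sqrt2}\big((1-|v|)+(1-|b|)\big)\ge\tfrac1{28\sqrt2}(2-|v|^2-|b|^2)$, using $|v_1|=|b_1|=1$. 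The wave-cone membership (i) is then a concrete calculation showing that \emph{any} difference of two $K$-points lies in $\Lambda$ (this is the content of the paper's Remark 2.2), and (ii) is automatic from the barycentric construction. None of your machinery --- the explicit inequalities for $K^{co}$, the trace identity, the $\{-1,+1\}$-tracer split of the combination --- is needed.

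The gap in your proposal is precisely the item you call ``the principal obstacle'': the uniform constant in (iii). Your Case A strategy ($|v|\le|b|$, set $\bar b=\bar w=0$ and perturb only $(v,M)$ via the Euler construction) fails concretely. Take $b$ close to $1$, say $b=0.99$, and $v,w$ with $|v|=0.98\le|b|$ and $w$ within $0.005$ of $v$; one checks this lies in $\operatorname{int}K^{co}$. The linear constraint $|v\pm\bar v-w|<1-b=0.01$ then forces $|\bar v|<0.015$, while your target is $|\bar v|\gtrsim 1-|v|^2\approx 0.04$. The point is that a perturbation in $v$ alone is throttled by the tracer constraints $|v\pm w|\le 1\pm b$ whenever $b$ is near $\pm1$ and $w\approx\pm v$; one must perturb $w$ in lockstep (e.g.\ $\bar w=\pm\bar v$, which is exactly a $K$-point difference with $b_{i^*}=b_1$), and then $\bar M$ must be re-chosen compatibly with both $\Lambda$ and the full PSD inequality of $K^{co}$. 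Your Case B has a symmetric difficulty that you also note but do not resolve. So, as written, the proposal is a plan with the hard estimate outstanding, whereas the paper's Carath\'eodory argument yields (i)--(iii) in one stroke without any case analysis.
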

\begin{proof}
Let $h=(b,w,v,M)\in \mbox{int}K^{co}$. By Carath\'eodory's theorem there
exist $\lambda_i \in (0,1)$, with $\sum_{i=1}^{N+1}
\lambda_i=1$ and $h_i=(b_i,w_i,v_i,M_i)\in
K$, for $i=1,\ldots, N+1$, such that
\begin{equation*}
h=\sum_{i=1}^{N+1}\lambda_ih_i,
\end{equation*}
where $N=7$ is the dimension of $\mathbb{R}\times\mathbb{R}^2\times\mathbb{R}^2\times\mathbb{S}_0^2$.

Suppose that $\lambda_1=\max \lambda_i$ and define $i^*$ the index such that
\begin{equation*}\lambda_{i^*}^2(|v_{i^*}-v_1|^2+|b_{i^*}-b_1|^2)=\max\{
\lambda_i^2(|v_i-v_1|^2+|b_i-b_1|^2): i=1,\ldots,8\}.
\end{equation*}

Observe that, since $h=\sum_{i=1}^8\lambda_{i}h_i$ and  $h-h_1=\sum_{i=2}^8
\lambda_{i}(h_i-h_1)$, we obtain
\begin{multline*}
|(v-v_1,b-b_1)|=\left|\sum_{i=2}^8\lambda_{i}(v_i-v_1,b_i-b_1)\right|\leq\sum_
{ i=2}^8\sqrt{\lambda_i^2
(|v_i-v_1|^2+|b_i-b_1|^2)}\\\leq
7\sqrt{\max\{\lambda_i^2(|v_i-v_1|^2+|b_i-b_1|^2):
i=1,\ldots,8\}}=7\sqrt{\lambda_{i^*}^2(|v_{i^*}-v_1|^2+|b_{i^*}-b_1|^2)}.
\end{multline*}
Thus, $|v-v_1|^2+|b-b_1|^2\leq 49\lambda_{i^*}^2(|v_{i^*}-v_1|^2+|b_{i^*}-b_1|^2).$
Then, it follows  by defining $\bar
h:=\frac{1}{2}\lambda_{i^*}(h_{i^*}-h_1)$ that
\begin{multline*}
\frac{1}{28\sqrt{2}}(2-(|v|^2+|b|^2))\leq\frac{1}{14\sqrt{2}}
((1-|v|)+(1-|b|))\leq\frac{1}{14\sqrt{2}}\sqrt{2}
\sqrt{(1-|v|)^2+(1-|b|)^2}
\\\leq\frac{1}{14}\sqrt{|v-v_1|^2+|b-b_1|^2}\leq
\frac{1}{14}7\lambda_{i^*}\sqrt{|v_{i^*}-v_1|^2+|b_{i^*}-b_1|^2}=
|(\bar v, \bar b)|.
\end{multline*}

Therefore, $|(\bar v, \bar b)|\geq C(2-(|v|^2+|b|^2))$.

It is easy to see that $(\bar
h,0)=(\bar{b},\bar{w},\bar{v},\bar{M},0)\in\Lambda$. Indeed, write
$v_{i^*}=(v_{i^*}^1, v_{i^*}^2)$ and $v_1=(v_1^1, v_1^2)$ and set
\begin{equation*}
\xi=\left(-\frac{v_{i^*}^2-v_1^2}{v_{i^*}^1-v_1^1},1,
-\frac{v_{i^*}^1v_{1}^2-v_1^1v_{i^*}^2}{v_{i^*}^1-v_1^1}\right).
\end{equation*}

Then
\begin{equation*}\left[\begin{array}{lll}
\bar M+ 0 I_2& \bar v\\
\bar v^t & 0\\
\bar w& \bar b
\end{array}
\right]\xi=
\frac{1}{2}\lambda_{i^*}\left[\begin{array}{lll}
(v_{i^*}^1)^2-(v_{1}^1)^2 &
v_{i^*}^1v_{i^*}^2-v_{1}^1v_{1}^2&v_{i^*}^1-v_{1}^1\\
v_{i^*}^2v_{i^*}^1-v_{1}^2v_{1}^1 &
(v_{i^*}^2)^2-(v_{1}^2)^2&v_{i^*}^2-v_{1}^2\\
v_{i^*}^1-v_{1}^1 & v_{i^*}^2-v_{1}^2& 0\\
b_{i^*}v_{i^*}^1-b_1v_{1}^1 & b_{i^*}v_{i^*}^2-b_1v_{1}^2 & b_{i^*}-b_1
             \end{array}
\right]\xi =0.
\end{equation*}
\end{proof}

\begin{rem}
In the proof of Lemma \ref{LI} we showed that, if $z_1,z_2 \in K$, then
$z_1-z_2\in \Lambda$. Thus, the $\Lambda$-convex hull of $K$ coincides with the
convex hull of $K$ (see \cite{delellis12} for more details). In contrast,
for the 3D MHD system case we were not able to prove this type of result and,
consequently, we could not perform the convex integration.
\end{rem}

It is clear that the only compactly supported plane wave is the trivial one.
Although we cannot work with an exact wave solution, in the
next result we construct a plane wave-like solution, which is a compactly
supported solution of \eqref{div} living in a small neighborhood of the
line spanned by a fixed wave state.
\begin{prop}\label{prop2}Let $\bar V\in\Lambda$ be such that $\bar V e_3\neq 0$.
Let $\sigma$ be the line joining the points $-\bar V$ and $\bar V$ in
$\mathcal M_{4\times 3}$. Then, there exists $\alpha>0$ such that, for every
$\varepsilon>0$, there exists a smooth $4\times 3$ matrix field $V$ given by
\[ V(x,t)=\left( \begin{array}{cc}
M(x,t)+q(x,t)I_2 & v(x,t)\\
v(x,t)^t & 0\\
w(x,t)^t & b(x,t)
  \end{array}\right),\]
where $M\in \mathbb{S}_0^2$, $v,w\in \mathbb R^2$, $b\in \mathbb R$, satisfying
the following properties:
\begin{itemize}
\item [(i)] $\mbox{div}_{(x,t)}V=0$;
\item [(ii)] $\mbox{supp }V\subset B_1(0)$;
\item [(iii)] $Im \;V\subset \sigma_\varepsilon=\{A\in \mathcal M_{4\times 3}:
\mbox{dist }(A, \sigma)<\varepsilon\}$;
\item [(iv)] $\int|v(y)|dy\geq \alpha|\bar v|$ and $\int|b(y)|dy\geq
\alpha|\bar b|$, where $\bar v=(\bar V_{i,3})_{i=1,2}$ and $\bar b=\bar V_{4,3}$.
\end{itemize}
\end{prop}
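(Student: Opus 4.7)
The approach is the standard localization-of-plane-waves scheme of De Lellis--Sz\'ekelyhidi \cite{delellis}. Choose $\xi\in\mathbb{R}^3\setminus\{0\}$ with $\bar V\xi=0$; the hypothesis $\bar V e_3\neq 0$ forces $\xi$ and $e_3$ to be linearly independent (otherwise $\xi=\xi_3 e_3$ with $\xi_3\neq 0$ would force $\bar V e_3=0$), so in particular the spatial part $\xi':=(\xi_1,\xi_2)$ is nonzero. Writing the condition $\bar V\xi=0$ row by row, the problem decouples naturally: rows $1$--$3$ yield the symmetric block that already appears in the 2D Euler setting, while row $4$ is the single scalar equation $\bar w\cdot\xi'+\bar b\,\xi_3=0$. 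I would treat the two blocks separately with a shared frequency vector $\xi$ and then sum the resulting fields.

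For the transport row, the orthogonality $(\bar w,\bar b)\perp\xi$ in $\mathbb{R}^3$ provides $\bar A\in\mathbb{R}^3$ with $\bar A\times\xi=(\bar w,\bar b)$. Fix once and for all a cutoff $\psi\in C^\infty_c(B_1(0))$ with $\|\psi\|_{L^\infty}\le 1$ and $\psi\not\equiv 0$, and for a large frequency $k$ set
\[
(w_k,b_k):=\mathrm{curl}_y\!\left[\tfrac{1}{k}\,\bar A\,\psi(y)\,\sin(k\,y\cdot\xi)\right];
\]
this is automatically divergence-free in $y$, supported in $B_1$, and a Leibniz expansion gives $(w_k,b_k)=(\bar w,\bar b)\,\psi(y)\cos(k\,y\cdot\xi)+O(1/k)$ in $L^\infty$. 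For the Euler block (rows $1$--$3$), I would invoke the potential construction of \cite{delellis}: given the symmetric $3\times 3$ matrix $\bar U$ built from $(\bar v,\bar M,\bar q)$ in the 2D Euler wave cone, there is a second-order scalar differential operator $\mathcal L(\partial)$ whose output is automatically symmetric, of the prescribed form with vanishing $(3,3)$-entry, and divergence-free in $y$, and which satisfies $\mathcal L(\partial)[\bar\Phi\,e^{iky\cdot\xi}]=-k^2\bar U\,e^{iky\cdot\xi}$ for an appropriate constant $\bar\Phi$. Applying $\mathcal L(\partial)$ to $k^{-2}\bar\Phi\,\psi(y)\,e^{iky\cdot\xi}$ and taking the real part yields the Euler block with leading behaviour $\bar U\,\psi(y)\cos(k\,y\cdot\xi)+O(1/k)$ in $L^\infty$.

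Summing the two blocks produces a smooth, compactly supported matrix field $V_k$ of the structure \eqref{defineV} satisfying (i) and (ii) by construction. For (iii), the uniform expansion
\[
V_k(y)=\bar V\,\psi(y)\cos(k\,y\cdot\xi)+R_k(y),\qquad \|R_k\|_{L^\infty}\le C/k,
\]
places each value $V_k(y)$ within distance $C/k$ of the point $t\bar V\in\sigma$ with $t=\psi(y)\cos(k\,y\cdot\xi)\in[-1,1]$, so $V_k$ takes values in $\sigma_\varepsilon$ as soon as $k$ is large enough. For (iv), a standard fast-oscillation averaging argument (using $\xi\neq 0$) gives
\[
\int_{B_1}|\psi(y)\cos(k\,y\cdot\xi)|\,dy\;\longrightarrow\;\tfrac{2}{\pi}\int_{B_1}|\psi(y)|\,dy\quad\text{as }k\to\infty,
\]
producing the lower bounds $\int|v_k|\ge\alpha|\bar v|$ and $\int|b_k|\ge\alpha|\bar b|$ for all $k$ sufficiently large, with $\alpha:=\tfrac{1}{\pi}\int_{B_1}|\psi|$. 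The main technical obstacle is constructing the Euler-block potential $\mathcal L(\partial)$ with the correct symmetry and vanishing $(3,3)$-entry; this is exactly the step carried out in \cite{delellis} for 2D Euler and transfers here verbatim, so the only genuinely new ingredient is the independent treatment of the transport row via the 3D-curl potential, which is unconditional once $(\bar w,\bar b)\perp\xi$ is noted.
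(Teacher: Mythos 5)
Your proposal is correct in substance and follows the same overall structure as the paper's proof: decompose $\bar V$ into the symmetric $3\times 3$ Euler block $\bar U$ (rows $1$--$3$) and the transport row $\bar W=(\bar w,\bar b)$ (row $4$), invoke \cite[Proposition~3.2]{delellis} for the Euler block, construct a separate localized plane wave for the transport row, and stack them. Where you genuinely diverge is in the transport row. The paper constructs $W$ in two steps: first an explicit second-order potential (whose components are $N^{-2}\partial^2_{1j}(\cdot)$) in the special case $\bar W=(0,\bar w_2,\bar b)$ with $\bar b\neq 0$, and then a linear change of variables $y\mapsto A^t y$, $W\mapsto (A^{-1})^tB(A^ty)$ to reduce the general case to the special one, followed by a covering/rescaling argument to restore $\mathrm{supp}\,W\subset B_1$. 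You instead observe that the constraint $\bar W\perp\xi$ in $\mathbb{R}^3$ lets you write $\bar W=\pm\,\xi\times\bar A$ and take a curl potential $\mathrm{curl}_y[k^{-1}\bar A\psi\sin(ky\cdot\xi)]$ directly; this is shorter, handles the general $\bar W$ in one stroke, and avoids both the change of variables and the attendant loss of support. (Watch the sign: with the standard convention $\mathrm{curl}(f\bar A)=(\nabla f)\times\bar A=-f'\,\bar A\times\xi+\cdots$, you should take $\bar A\times\xi=-(\bar w,\bar b)$, or flip the sign of $\bar A$, to get the leading term $+\bar W\psi\cos(ky\cdot\xi)$.) One further point worth making explicit, which your write-up handles better than the paper's, is that for condition (iii) one really needs the Euler block and the transport row to oscillate \emph{in phase} --- same $\xi$, same cutoff $\psi$, same $\cos(ky\cdot\xi)$ --- so that the combined field is close to a scalar multiple of the single matrix $\bar V$ rather than to an independent pair of multiples of $\bar U$ and $\bar W$; you state this coordination openly, while the paper leaves it implicit when it stacks the output of \cite[Prop.~3.2]{delellis} on top of its own $W$.
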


\begin{proof}Let us write $\bar V=\left(\begin{array}{l}\bar U\\\bar W^t\end{array}\right)$,
where $\bar U=\left(\begin{array}{cc}\bar M+\bar qI_2& \bar v\\\bar v^t& 0\end{array}\right)$
and $\bar W=(\bar w,\bar b)$. Observe that  $\bar U$ is exactly the matrix arising in
the differential inclusion associated to the incompressible Euler equations,
see \cite{delellis}. Therefore, we can use \cite[Proposition 3.2]{delellis}
to obtain the existence of a matrix field $U: \mathbb R^2_x\times \mathbb R_t
\rightarrow \mathcal M_{3\times 3}$ such that $\mbox{div}_{(x,t)}U=0$,
$\mbox{supp }U\subset B_1(0)$, $Im \;U\subset \{A\in \mathcal M_{3\times 3}:
\mbox{dist }(A, \sigma_{\bar U})<\varepsilon\}$, where $\sigma_{\bar U}$ is
the line joining the points $-\bar U$ and $\bar U$ in $\mathcal M_{3\times 3}$,
and $\int|Ue_3(y)|dy\geq \alpha|\bar v|$.

Now, we will construct $W: \mathbb R^2_x\times \mathbb R_t\rightarrow
\mathbb R^3$ such that $\mbox{div}_{(x,t)}W=0$,  $\mbox{supp }W\subset B_1(0)$,
$Im \;W\subset \{a\in\mathbb R^3: \mbox{dist }(a, \sigma_{\bar W})<\varepsilon\}$,
where $\sigma_{\bar W}$ is the line joining the points $-\bar W$ and $\bar W$
in $\mathbb R^3$, and $\int|W(y)\cdot e_3|dy\geq
\alpha|\bar b|$. Once we have done this we define $V=\left(\begin{array}{l}
U\\W^t\end{array}\right)$ so that it is clear that $V$ satisfies conditions
$(i)$ to $(iv)$ and the proposition is proved.

In order to do so we  divide the construction in two parts. First, we suppose that
$\bar W=(0,\bar w_2,\bar b)$ with $\bar b\neq 0$. Let $\phi: \mathbb R^3\rightarrow \mathbb
R$ be a smooth cutoff function such that $|\phi|\leq 1$, $\phi=1$
on $B_{1/2}(0)$ and $\mbox{supp}\;(\phi)\subset B_{1}(0)$.
Define
\begin{equation*}W(y)=\frac{1}{N^2}\left(\begin{array}{c}
\partial^2_{12}(\bar w_2\phi\sin(Ny_1))+\partial^2_{13}(\bar b\phi\sin(Ny_1))\\
-\partial^2_{11}(\bar w_2\phi\sin(Ny_1))\\
-\partial^2_{11}(\bar b\phi\sin(Ny_1))
\end{array}\right).
\end{equation*}

Note that $W$ is a smooth divergence-free vector field with support
contained in $B_1(0)$. Moreover, for $y\in B_{1/2}(0)$ we have that $W(y)=\bar W
\sin(Ny_1)$, thus \[\int |W(y)\cdot e_3|dy \geq \int_{B_{1/2}(0)} |W(y)\cdot e_3|dy=|\bar
W\cdot e_3| \int_{B_{1/2}(0)} |\sin(Ny_1)|dy\geq 2\alpha |\bar b|,\]
for some $\alpha >0$.

Define $\tilde W=\left(\begin{array}{c}
0\\
\bar w_2\sin(Ny_1)\\
\bar b\sin(Ny_1)
\end{array}\right)$ and observe that $||W-\phi \tilde W||_{\infty}\leq \frac{C}{N^2}||\phi||_{C^2}.$

Therefore, by taking $N$ sufficiently large we have that $||W-\phi \tilde W||_{\infty}<
\varepsilon$.

Finally, since $|\phi|\leq 1$ and $\tilde W$ takes value in $\sigma_{\bar W}$
the image of $\phi \tilde W$ is contained in $\sigma_{\bar W}$. Thus the image
of $W$ is contained
in the $\varepsilon$-neighborhood of $\sigma_{\bar W}$.

Next we consider the general case. By hypothesis $\bar W^t e_3\neq 0$ and
$\bar W^t \xi=0$ for some $\xi \in \mathbb R^3 \setminus\{0\}$. Clearly,
$\xi$ and $e_3$ are linearly independent. Set $\eta \in \mathbb R^3\setminus
\{0\}$ in such way that $\{\xi,\eta,e_3\}$ is a basis of $\mathbb R^3$ and let
$A$ be the $3\times 3$ matrix given by $Ae_1=\xi$, $Ae_2=\eta$ and $Ae_3=e_3$.

Define $\bar B=A^t\bar W$. It is clear that  $\bar B\in \mathbb R^3$,
$\bar B_1=0$ and $\bar B_3\neq 0$. Thus we use the above argument to construct a smooth divergence-free map
$B: \mathbb R^3\rightarrow \mathbb R^3$ with compact support
in $B_1(0)$ and image lying in the $||A||^{-1}\varepsilon$-neighborhood of the
line segment $\tau$ with endpoints $-\bar B$ and $\bar B$.

Set $W(y)=(A^{-1})^tB(A^ty)$. First observe that $W$ is supported in
$(A^{-1})^tB_1(0)$. Since the isomorphism $T: X \mapsto (A^{-1})^t X$ maps
$\tau$ into $\sigma_{\bar W}$, we have that the image of $W$ is contained in the
$\varepsilon$-neighborhood of $\sigma_{\bar W}$. The following straightforward
calculation shows that $W$ is divergence-free:
\begin{multline*}\int W(y)\cdot \nabla \phi(y)dy=\int((A^{-1})^tB(z))\cdot
\nabla \phi((A^{-1})^tz)(\det A)^{-1}dz\\=(\det A)^{-1}\int B(z)\cdot \nabla
(\phi((A^{-1})^tz))dz=0, \mbox{ for all }\phi\in \mathcal
C^\infty_c(\mathbb R^3;\mathbb R).
\end{multline*}

Finally, we have
\begin{eqnarray*}\int_{(A^{-1})^tB_1(0)}|W(y)^te_3|dy=\int_{(A^{-1})^tB_1(0)}|((A^{-1})^t
B(A^ty))^te_3|dy\\=\int_{B_1(0)}|((A^{-1})^tB(z))^te_3| \dfrac{dz}{|\det
A|}\geq \dfrac{2\alpha|((A^{-1})^t\bar B)^te_3|}{|\det A|}=\dfrac{2\alpha|(\bar
W)^te_3|}{|\det A|}.\end{eqnarray*}

To conclude, we observe that using the same argument of covering and rescaling
as the one presented in the proof of \cite[Proposition 3.2]{delellis} one can
rescale $W$ in such way that all desired properties remain valid.
\end{proof}

Let $X_0$ be the set of vector fields $(b,w,v,M,q) \in \mathcal
C^\infty(\mathbb R^2\times\mathbb R)$ that satisfy $(i),
(ii)$ and $(iii)$ below:
\begin{itemize}
\item [(i)] $\mbox{supp}(b,w,v,M,q)\subset \Omega$,
\item [(ii)] $(b,w,v,M,q)$ solves (\ref{linear}) in $\mathbb
R^2\times\mathbb R$,
\item [(iii)] $(b,w,v,M,q)(x,t) \in \mathcal{U}$ for all $(x,t)\in \mathbb
R^2\times\mathbb R$.
\end{itemize}

We endow $X_0$ with the topology of $L^{\infty}$-weak* convergence and we define
$X$ as the closure of $X_0$ in this topology.

It is straightforward that, if $(b,w,v,M,q) \in X$ is such that $|v(x,t)|=1,\;
|b(x,t)|=1$ for almost every $(x,t)\in \Omega$, then  $v$, $b$ and
$p:=q-\frac{1}{2}|v|^2$ are a weak solution of (\ref{p-t}) such that
$v(x,t)=0,\;b(x,t)=0$ and $p(x,t)=0$ for
all $(x,t) \in \mathbb R^2\times \mathbb R\setminus \Omega$.

Next we prove a key result to implement the convex integration scheme in the
proof of Theorem \ref{theo}.
\begin{lemma}\label{lemma4}
There exists a constant $\beta>0$ such that, for each $(b_0,w_0,v_0,M_0,q_0) \in X_0$,
there exists a sequence $(b_k,w_k,v_k,M_k,q_k)
\in X_0$ such that
\begin{eqnarray*}
&&\|v_k\|_{L^2(\Omega)}^2+\|b_k\|_{L^2(\Omega)}^2 \geq
\|v_0\|_{L^2(\Omega)}^2+\|b_0\|_{L^2(\Omega)}^2+
\beta(2|\Omega|-(\|v_0\|^2_{L^2(\Omega)}+\|b_0\|^2_{L^2(\Omega)}))^2,\\
&&\mbox{and }\quad (b_k,w_k,v_k,M_k,q_k)\stackrel{*}{\rightharpoonup}(b_0,w_0,v_0,M_0,q_0)
\quad \mbox{in} \quad L^{\infty}.
\end{eqnarray*}
\end{lemma}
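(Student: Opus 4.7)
The plan is to apply the standard convex integration perturbation step: add to the subsolution $(b_0,w_0,v_0,M_0,q_0)$ a high-frequency, compactly supported oscillation built from localized plane waves, with directions chosen by Lemma \ref{LI} and amplitudes controlled by Proposition \ref{prop2}. First I would cover $\Omega$ by a finite family of pairwise disjoint open balls $B_{r_j}(y_j)\subset\Omega$ with $\bigl|\Omega\setminus\bigcup_j B_{r_j}(y_j)\bigr|$ arbitrarily small, choosing $r_j$ so small that the smooth subsolution $(b_0,w_0,v_0,M_0,q_0)$ varies by at most some $\delta>0$ on each ball. At each center $y_j$, Lemma \ref{LI} supplies a direction $\bar V_j=(\bar b_j,\bar w_j,\bar v_j,\bar M_j,0)\in\Lambda$ such that the segment $(b_0,w_0,v_0,M_0,q_0)(y_j)\pm\bar V_j$ lies in $\mathcal U$ and $|(\bar v_j,\bar b_j)|\ge C(2-|v_0(y_j)|^2-|b_0(y_j)|^2)$.

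For each $j$ and each large integer $N$, Proposition \ref{prop2} (translated and rescaled from the unit ball to $B_{r_j}(y_j)$, with oscillation frequency scaling in $N$) yields a smooth divergence-free matrix field $V_j^N$ supported in $B_{r_j}(y_j)$ whose image lies within $\varepsilon$ of the segment $[-\bar V_j,\bar V_j]$ and whose velocity and tracer parts satisfy $\int|\tilde v_j^N|\,dx\ge\alpha|\bar v_j|\,|B_{r_j}(y_j)|$ and $\int|\tilde b_j^N|\,dx\ge\alpha|\bar b_j|\,|B_{r_j}(y_j)|$. I set $V^N:=V_0+\sum_j V_j^N$ and verify $V^N\in X_0$: smoothness, support in $\Omega$, and divergence-freeness are immediate. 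For pointwise membership in $\mathcal U$, on each $B_{r_j}(y_j)$ the field $V^N$ is $\delta$-close to $V_0(y_j)+V_j^N(x,t)$ and $V_j^N(x,t)$ is $\varepsilon$-close to the segment $[-\bar V_j,\bar V_j]$; by Lemma \ref{LI}(ii) the translated segment $V_0(y_j)+[-\bar V_j,\bar V_j]$ lies in the open set $\mathcal U$, so choosing $\delta,\varepsilon$ small enough keeps $V^N(x,t)\in\mathcal U$ everywhere.

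The $L^2$ gain uses disjointness of supports and Jensen's inequality on each ball,
\[\int_{B_{r_j}(y_j)}|\tilde v_j^N|^2\,dx\ge\frac{1}{|B_{r_j}(y_j)|}\Bigl(\int_{B_{r_j}(y_j)}|\tilde v_j^N|\,dx\Bigr)^2\ge\alpha^2|\bar v_j|^2\,|B_{r_j}(y_j)|,\]
and similarly for $\tilde b_j^N$. Summing over $j$, invoking the pointwise bound of Lemma \ref{LI}(iii), recognizing a Riemann sum for $\int_\Omega(2-|v_0|^2-|b_0|^2)^2\,dx$, and applying Jensen a second time give
\[\|\tilde v^N\|_{L^2(\Omega)}^2+\|\tilde b^N\|_{L^2(\Omega)}^2\ge\frac{\alpha^2C^2}{|\Omega|}\bigl(2|\Omega|-\|v_0\|_{L^2(\Omega)}^2-\|b_0\|_{L^2(\Omega)}^2\bigr)^2.\]
Since $V_j^N$ oscillates at frequency proportional to $N$, a Riemann–Lebesgue argument gives $\tilde V^N\rightharpoonup 0$ weak-$*$ as $N\to\infty$, so the cross terms $2\langle v_0,\tilde v^N\rangle+2\langle b_0,\tilde b^N\rangle$ in the expansion of $\|v^N\|_{L^2(\Omega)}^2+\|b^N\|_{L^2(\Omega)}^2$ tend to zero. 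Choosing $N_k\to\infty$ and absorbing a factor of $1/2$ into $\beta$ produces the required sequence with $\beta$ depending only on $|\Omega|$, $\alpha$, and $C$.

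The main obstacle is reconciling two competing smallness requirements: $\delta$ and $\varepsilon$ must be small enough to keep the perturbation inside the open set $\mathcal U$, while simultaneously the $L^1$ lower bound from Proposition \ref{prop2}(iv) must combine with Jensen's inequality to yield a scale-invariant lower bound on the $L^2$ norm of the perturbation. It is precisely the algebraic bound $|(\bar v,\bar b)|\ge C(2-|v|^2-|b|^2)$ in Lemma \ref{LI}(iii) that makes the quadratic defect $(2|\Omega|-\|v_0\|_{L^2(\Omega)}^2-\|b_0\|_{L^2(\Omega)}^2)^2$ appear on the right-hand side.
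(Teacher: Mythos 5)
Your proposal is correct and follows essentially the same strategy as the paper: select oscillation directions by Lemma~\ref{LI}, localize via Proposition~\ref{prop2} on a finite family of disjoint small balls covering $\Omega$, use uniform continuity of the smooth subsolution to keep the perturbed field inside the open set $\mathcal{U}$, and handle the cross terms by weak-$*$ convergence. The only notable organizational difference is where the Cauchy--Schwarz/Jensen step is placed: the paper first establishes an $L^1$ lower bound $\int(|v_k-v_0|+|b_k-b_0|)\,dx\,dt \gtrsim \int_\Omega(2-|v_0|^2-|b_0|^2)$ via a Riemann-sum estimate for the linear quantity, and only afterward applies $\|\cdot\|_{L^2}^2\ge |\Omega|^{-1}\|\cdot\|_{L^1}^2$; you instead apply Jensen on each ball to pass directly to $L^2$, recognize a Riemann sum for the squared quantity $\int_\Omega(2-|v_0|^2-|b_0|^2)^2$, and apply Jensen a second time over $\Omega$. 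Both variants deliver the same $\beta\sim\alpha^2C^2/|\Omega|$, so this is a cosmetic reordering rather than a genuinely different route. One small imprecision: Proposition~\ref{prop2} as stated gives a single compactly supported field per $\varepsilon$, and the weak-$*$ convergence in the paper is driven by shrinking the ball radii (so the rescaled oscillations become fast); your phrasing ``with oscillation frequency scaling in $N$'' unpacks the hidden frequency parameter inside the proof of Proposition~\ref{prop2}, which is equivalent but should be noted as going slightly beyond the proposition's literal statement.
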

Although the proof of Lemma \ref{lemma4} is a  simple adaptation of the proof of
Lemma 4.6 of \cite{delellis}, due its crucial role in this paper we are going
to reproduce the main steps of the proof.
\begin{proof}
Let $z_0:=(b_0,w_0,v_0,M_0,q_0)\in X_0$. We apply Lemma \ref{LI} to each
element of the compact set $\mbox{Im} (z_0)\subset \mathcal U$ so
that we obtain that, for each $(x,t)\in \Omega$, there exists a
direction
\begin{equation*}
\bar z(x,t):=(\bar b,\bar w,\bar v,\bar M,0)(x,t) \in \Lambda
\end{equation*}
such that the line segment with endpoints $z_0(x,t)\pm \bar z(x,t)$ is contained in $\mathcal{U}$, and
\begin{equation*}
|\bar v(x,t)|+|\bar b(x,t)|\geq \sqrt{|\bar v(x,t)|^2+|\bar b(x,t)|^2}
\geq C(2-(|v_0(x,t)|^2+|b_0(x,t)|^2)).
\end{equation*}
Observe that since $z_0\in X_0$ then $|\bar v(x,t)|+|\bar b(x,t)|>0$, for all $(x,t)\in \mathbb R^2\times \mathbb R$.
Also, it is clear from the construction of $(\bar b,\bar w,\bar v,\bar M)$ and the fact that
$(b_0,w_0,v_0,M_0,q_0)$ is uniformly continuous that there exists $\varepsilon>0$
such that, for any $(x,t),\;(x_0,t_0)\in \Omega$ with
$|x-x_0|+|t-t_0|<\varepsilon$, the $\varepsilon$-neighborhood of the line
segment with endpoints $z_0(x,t)\pm \bar z(x_0,t_0)$ is also contained in $\mathcal{U}$.

Now, since $\bar b\neq 0$ and $\bar v\neq 0$ we can use Proposition \ref{prop2}
with $(\bar b,\bar w,\bar v,\bar M,0)(x_0,t_0)\in \Lambda$ and $\varepsilon>0$ to obtain a
smooth solution $(b,w,v,M,q)$ of (\ref{linear}) with the properties stated in
the Proposition \ref{prop2}. For every $r<\varepsilon$ let
\begin{equation*}(b_r,w_r,v_r,M_r,q_r)(x,t)=(b,w,v,M,q)\left(\frac{x-x_0}{r},\frac{t-t_0}{r}
\right).
\end{equation*}
Therefore, $(b_r,w_r,v_r,M_r,q_r)$ is also a smooth solution of \eqref{linear},
satisfying the following properties
\begin{itemize}
\item [(i)] $\mbox{supp} ((b_r,w_r,v_r,M_r,q_r))\subset B_r(x_0,t_0)$,
\item [(ii)] $\mbox{Im} ((b_r,w_r,v_r,M_r,q_r))$ is contained in the $\varepsilon$-
neighborhood of the line segment with endpoints $\pm (\bar b,\bar w,\bar v,\bar M,0)(x_0,t_0)$,
\item [(iii)] $\int|v_r|dxdt\geq \alpha |\bar v(x_0,t_0)||B_r(x_0,t_0)|$ and
$\int|b_r|dxdt\geq \alpha |\bar b(x_0,t_0)||B_r(x_0,t_0)|.$
 \end{itemize}
It is clear from the above properties and the fact that the  line
segment with endpoints $z_0(x,t)\pm \bar z(x_0,t_0)$ is contained in $\mathcal{U}$
that, for any $r<\varepsilon$, we have $(b_0,w_0,v_0,M_0,q_0)+ (b_r,w_r,v_r,M_r,q_r)\in X_0.$

Now, since $v_0$ is uniformly continuous, one can find a radius $r_0>0$ such that,
for any $r<r_0$, there exists a finite family of
pairwise disjoint balls $B_{r_j}(x_j,t_j) \subset\Omega$ with $r_j<r$ such that
\begin{multline}\label{int1}
\int_{\Omega}(2-(|v_0(x,t)|^2+|b_0(x,t)|^2))dxdt\\
\leq2\sum_j (2-(|v_0(x_j,t_j)|^2+|b_0(x_j,t_j)|^2))|B_{r_j}(x_j,t_j)|.
\end{multline}

Next we fix $k \in \mathbb N$ such that $\frac{1}{k}<\min\{r_0,\varepsilon\}$
and we choose a  finite family of pairwise disjoint balls $B_{r_{k,j}}(x_{k,j},t_{k,j}) \subset
\Omega$ with radii $r_{k,j}<\frac{1}{k}$ for which \eqref{int1} is valid. In each
ball $B_{r_{k,j}}(x_{k,j},t_{k,j})$ we apply the construction above so that we
obtain a sequence of smooth solution of \eqref{linear}, namely
$(b_{k,j},w_{k,j},v_{k,j},M_{k,j},q_{k,j})$, satisfying the appropriated
versions of properties $(i),(ii)$ and $(iii)$. In particular, we have that
\begin{eqnarray*}
&&(b_k,w_k,v_k,M_k,q_k):=(b_0,w_0,v_0,M_0,q_0)+\sum_j(b_{k,j},w_{k,j},v_{k,j},
M_{k,j},q_{k,j}) \in X_0
\end{eqnarray*}
and, by property $(iii)$ and \eqref{int1},
\begin{equation}\label{34}
\int(|v_k-v_0|+|b_k-b_0|)dxdt\\
\geq \frac{C\alpha}{2} \int_{\Omega} (2-(|v_0|^2+|b_0|^2)dxdt.
\end{equation}

Finally, observe that $(b_k,w_k,v_k,M_k,q_k)\stackrel{*}{\rightharpoonup}
(b_0,w_0,v_0,M_0,q_0)$ in $L^{\infty}$. Consequently,
\begin{multline}\label{36}
\liminf_{k\rightarrow \infty}(\|v_k\|^2_{L^2(\Omega)}+\|b_k\|^2_{L^2(\Omega)})
=\|v_0\|^2_{L^2(\Omega)}+
\liminf_{k\rightarrow \infty}(2\langle v_0, v_k-v_0\rangle+\|v_k-v_0\|^2_{L^2(\Omega)})\\+
\|b_0\|^2_{L^2(\Omega)}+ \liminf_{k\rightarrow \infty}(2\langle b_0, b_k-b_0\rangle+\|b_k-b_0\|^2_{L^2(\Omega)})\\
\geq\|v_0\|_{L^2(\Omega)}^2+\|b_0\|_{L^2(\Omega)}^2+\frac{1}{|\Omega|}\liminf_{k\rightarrow \infty}
(\|v_k-v_0\|_{L^1(\Omega)}+\|b_k-b_0\|_{L^1(\Omega)})^2.
\end{multline}

In view of (\ref{34}) and (\ref{36}) we get
\begin{equation*}
\liminf_{k\rightarrow \infty}(\|v_k\|_{L^2}^2+\|b_k\|_{L^2}^2)\geq\|v_0\|_{L^2}^2
+\|b_0\|_{L^2}^2+ \frac{C^2\alpha
^2}{4|\Omega|}(2|\Omega|-(\|v_0\|^2_{L^2}+\|b_0\|^2_{L^2}))^2.
\end{equation*}
Thus we proved the lemma  with $\beta=\frac{C^2\alpha^2}{4|\Omega|}$.
\end{proof}

%%%%%%%%%%%%%%%%%%%%%%%      THEOREM'S PROOF  %%%%%%%%%%%%%%%%%%%%%%%%%%%%%%%%%

\section{Proof of the main theorem}\label{sec3}
%%%%%%%%%%%%%%%%%     TEOREM 1             %%%%%%%%%%%%%%%%%%%%%

\textit{Proof of Theorem \ref{theo}:} The idea of the proof is to construct a
sequence $\{z_k=(b_k,w_k,v_k,M_k,q_k)\}\subset
X_0$ satisfying the following conditions:
\begin{itemize}
\item [(i)] there exists $z=(b,w,v,M,q)\in X$ such that $z_k \rightarrow z$
strongly in $L^2(\mathbb R_x^2\times\mathbb R_t)$;
\item [(ii)]$\|v_{k+1}\|_{L^2}^2 +\|b_{k+1}\|_{L^2}^2\geq
\|v_k\|_{L^2}^2+\|b_k\|_{L^2}^2+
\beta(2|\Omega|-(\|v_k\|^2_{L^2}+\|b_{k}\|^2_{L^2}))^2$,
for all $ k\in \mathbb N$.
\end{itemize}

Then, using $(i)$ we can pass to the limit in $(ii)$ in order to obtain
\begin{eqnarray*}\|v\|_{L^2(\Omega)}^2+\|b\|_{L^2(\Omega)}^2 \geq \|v\|_{L^2(\Omega)}^2+
\|b\|_{L^2(\Omega)}^2+
\beta(2|\Omega|-(\|v\|^2_{L^2(\Omega)}+\|b\|^2_{L^2(\Omega)}))^2
\end{eqnarray*}
and hence $\|v\|^2_{L^2(\Omega)}+\|b\|^2_{L^2(\Omega)}=2|\Omega|$. Since
$|v|\leq1$, $|b|\leq 1$ in $\Omega$ and since they are supported in $\Omega$, we
conclude that $|v|=1_{\Omega}=|b|$. Clearly $(b,w,v,M)\in K^{co}$ for a.e.
$(x,t) \in \Omega$ since $(b,w,v,M,q)\in X$. This implies that $(b,w,v,M)(x,t)\in K$
for a.e. $(x,t)\in\Omega$, as we wished.

It remains to construct a sequence $\{z_k\}\in X_0$ satisfying $(i)$ and
$(ii)$. In order to do so, set $(b_1,w_1,v_1,M_1,q_1)\equiv 0$ in $\mathbb
R^2\times\mathbb R$ and let $\rho_{\varepsilon}$ be a standard mollifying kernel
in $\mathbb R^2_x\times\mathbb R_t$. The sequence
$(b_k,w_k,v_k,M_k,q_k)\in X_0$ is  constructed  inductively, as well as an
auxiliary sequence of numbers $\eta_k>0$, as described below.
Once we have obtained $z_j:=(b_j,w_j,v_j,M_j,q_j)$ for $j\leq k$ and
$\eta_1,\ldots, \eta_{k-1}$, we choose
\begin{equation}\label{1}
\eta_k<2^{-k}
\end{equation}
in such way that
\begin{eqnarray}\label{2}
\|z_k-z_k\ast\rho_{\eta_k}\|_{L^2(\Omega)}<2^{-k}.
\end{eqnarray}

We then apply Lemma \ref{lemma4} to obtain $z_{k+1}=(b_{k+1},w_{k+1},v_{k+1},
M_{k+1},q_{k+1})\in X_0$ such that
\begin{multline}\label{in}
\|v_{k+1}\|_{L^2(\Omega)}^2 +\|b_{k+1}\|_{L^2(\Omega)}^2\\\geq
\|v_k\|_{L^2(\Omega)}^2+\|b_k\|_{L^2(\Omega)}^2+
\beta(2|\Omega|-(\|v_k\|^2_{L^2(\Omega)}+\|b_{k}\|^2_{L^2(\Omega)}))^2
\end{multline}
\begin{eqnarray}\label{3}
\mbox{ and }\|(z_{k+1}-z_k)\ast\rho_{\eta_j}\|_{L^2(\Omega)}<2^{-k} \quad
\mbox{for all} \quad j\leq k.
\end{eqnarray}
Since the sequence $\{z_k\}$ is bounded in $L^{\infty}(\mathbb R^2_x\times
\mathbb R_t)$, there exists a subsequence, which we still denote by $z_k$, and a
vector field $z=(b,w,v,M,q)\in X$ such that $z_k \stackrel{*}{\rightharpoonup} z
$ in $L^{\infty} (\mathbb R^2_x\times\mathbb R_t)$. Moreover, the sequence
$\{z_k\}$ and the corresponding sequence $\{\eta_k\}$ satisfy the properties
\eqref{1}, \eqref{2}, \eqref{in} and \eqref{3}. Then, for every $k\in\mathbb N$
\begin{eqnarray*}
\|z_k\ast\rho_{\eta_k}-z\ast\rho_{\eta_k}\|_{L^2(\Omega)}\leq
\sum_{j=0}^{\infty}\|z_{k+j}\ast\rho_{\eta_k}-z_{k+j+1}\ast\rho_{\eta_k}\|_{
L^2(\Omega)}\leq\sum_{j=0}^{\infty}2^{-(k+j)}\leq 2^{-k+1},
\end{eqnarray*}
and since
\begin{eqnarray*}
\|z_k-z\|_{L^2(\Omega)}\leq\|z_k-z_k\ast\rho_{\eta_k}\|_{L^2(\Omega)}+
\|z_k\ast\rho_{\eta_k}-z\ast\rho_{\eta_k}\|_{L^2(\Omega)}+\|z\ast\rho_{\eta_k}
-z\|_{L^2(\Omega)},
\end{eqnarray*}
we deduce that $z_k\rightarrow z$ strongly in $L^2(\Omega)$. This concludes the
proof. \hfill $\Box$

We point out that the proof of the main theorem could be done using a Baire
category argument but we preferred to use an approximating procedure, which is
more constructive and could be useful for doing a numerical visualization,
along the lines of \cite{BLL}.

\section{Application to the MHD equations and concluding remarks}\label{sec4}

Consider the 3D magnetohydrodynamics equations (MHD),
\begin{eqnarray}
\left\{\begin{array}{l}\label{mhd3}
\partial_t u + (u\cdot\nabla)u-(\mbox{\textit{curl} }b)\times b +\nabla p=0\\
\partial_t b - \mbox{\textit{curl} }(u\times b)=0\\
\mbox{\textit{div} }u=0\\
\mbox{\textit{div} }b=0
\end{array}\right.
\end{eqnarray}where $u=u(x,t)\in \mathbb R^3$ is the velocity,
$b=b(x,t)\in\mathbb R^3$ is the magnetic induction, $p=p(x,t)\in\mathbb R$ is
the pressure and $(x,t)\in \mathbb R^3\times \mathbb R$.

The system (\ref{mhd3}) describes the motion of an ideal incompressible
conducting fluid interacting with a magnetic field. Note that by taking $b=0$ in
(\ref{mhd3}) we obtain the Euler equations.

Observe that, if we restrict ourself to the class of
solutions that preserve the following symmetry:
\begin{eqnarray*}
&&u=u(x,t)=(u_1(x,t),u_2(x,t),0),\\
&&b=b(x,t)=(0,0,b(x,t)),\\
&&(x,t)=(x_1,x_2,t)\in\mathbb R^2\times \mathbb R
\end{eqnarray*}
then system (\ref{mhd3}) is reduced to
\begin{eqnarray}
\left\{\begin{array}{l}\label{mhd2}
\partial_t u+ (u\cdot\nabla)u+\nabla \left(p+\dfrac{|b|^2}{2}\right)=0\\
\partial_t b + (u\cdot\nabla)b=0\\
\mbox{\textit{div} }u=0.
\end{array}\right.
\end{eqnarray}

System \eqref{mhd2} can be rewritten as the incompressible Euler equations
with a passive tracer by defining $\bar p=p+|b|^2/2$. Therefore, by Theorem
\ref{theo}, we can conclude that weak solutions for these equations are not
unique. In particular, solutions of (\ref{mhd2}) are solutions of the full
3D MHD, so that weak solutions of the 3D MHD are not unique.

We add some concluding remarks. The work of De Lellis and Sz\'ekelyhidi has
generated substantial ongoing activity concerning weak solutions of the
incompressible Euler equations, mainly along the following direction: the
construction of dissipative solutions together with improving the regularity
of the velocity field in wild solutions, see
\cite{B,BLS,CLS,delellis2,delellis13,Isett13} and references therein.  This
line of investigation naturally suggests a direction for future research,
that of constructing dissipative solutions of system \eqref{p-t} and seeking
wild solutions of \eqref{p-t} with improved regularity. In addition, it would
be interesting to find a broader class of examples of this construction for
the ideal MHD system and to provide a computational visualization of these
solutions along the lines of what was done in \cite{BLL} for Shnirelman's
example. In fact, it is more natural to construct such a visualization for
the passive-tracer example, than it would be for the original De Lellis
and Sz\'ekelyhidi construction.

\medskip
\footnotesize
{\bf Acknowledgments:} Research of Helena J. Nussenzveig Lopes is supported in part
by CNPq grant \# 306331/2010-1, and by FAPERJ grant \# E-26/103.197/2012.
Research of Milton C. Lopes Filho is supported in part by CNPq grant
\# 303089/2010-5. Anne C. Bronzi's research is supported by Post-Doctoral grant
\# 236994/2012-3. This work was supported by FAPESP Thematic Project \# 2007/51490-7
and by the FAPESP grant \# 05/58136-9. The research presented here was part
of Anne C. Bronzi's doctoral dissertation at the
Mathematics graduate program of UNICAMP. The authors thank C. De Lellis and
L. Sz\'ekelyhidi for useful observations.

\end{document}